\documentclass{article}
\usepackage{amsmath}
\usepackage{amsthm}
\usepackage{mathrsfs}
\usepackage{latexsym}
\usepackage{amssymb}
\usepackage{amscd}
\usepackage[dvips]{graphics}
\usepackage[all,cmtip]{xy}

\bibliographystyle{plain}

\DeclareSymbolFont{AMSb}{U}{msb}{m}{n}
\DeclareMathSymbol{\N}{\mathbin}{AMSb}{"4E}
\DeclareMathSymbol{\Z}{\mathbin}{AMSb}{"5A}
\DeclareMathSymbol{\R}{\mathbin}{AMSb}{"52}
\DeclareMathSymbol{\Q}{\mathbin}{AMSb}{"51}
\DeclareMathSymbol{\I}{\mathbin}{AMSb}{"49}
\DeclareMathSymbol{\C}{\mathbin}{AMSb}{"43}

\DeclareFontFamily{U}{mathx}{\hyphenchar\font45}
\DeclareFontShape{U}{mathx}{m}{n}{
      <5> <6> <7> <8> <9> <10>
      <10.95> <12> <14.4> <17.28> <20.74> <24.88>
      mathx10
      }{}
\DeclareSymbolFont{mathx}{U}{mathx}{m}{n}
\DeclareFontSubstitution{U}{mathx}{m}{n}
\DeclareMathAccent{\widecheck}{0}{mathx}{"71}
\DeclareMathAccent{\wideparen}{0}{mathx}{"75}

\DeclareMathOperator*{\ctens}{\widehat{\otimes}}
\DeclareMathOperator*{\cotens}{\square}

\newcommand{\iso}{\cong}
\newcommand{\invlim}{\underleftarrow{\textnormal{lim}}\,}
\newcommand{\dirlim}{\underrightarrow{\textnormal{lim}}\,}

\newcommand{\dash}{\textnormal{-}}
\newcommand{\tn}[1]{\textnormal{#1}}
\newcommand{\cat}[1]{\tn{\textbf{#1}}}

\numberwithin{equation}{section}

\title{Comodules, contramodules and Pontryagin duality}

\author{John MacQuarrie and Ricardo Souza}

\begin{document}

\newtheorem{defn}[equation]{Def{i}nition}
\newtheorem{prop}[equation]{Proposition}
\newtheorem{lemma}[equation]{Lemma}
\newtheorem{theorem}[equation]{Theorem}
\newtheorem{corol}[equation]{Corollary}
\newtheorem{question}[equation]{Questions}

\maketitle

\section{Introduction}

Let $k$ be a field, treated as a discrete topological ring.  Whenever $V$ is a topological vector space, denote by $V^*$ the vector space $\tn{Hom}_k(V,k)$ of continuous linear maps from $V$ to $k$, given the compact-open topology.  Tensor products are taken over $k$ if not otherwise indicated.

Let $C$ be a $k$-coalgebra \cite[\S 1.1]{DNR}.  In \cite{Simson}, Simson showed that the topological vector space $C^*$ inherits the structure of a pseudocompact algebra and that the contravariant functor $(-)^* = \tn{Hom}_k(-,k)$ yields a duality between the category of coalgebras and coalgebra homomorphisms, and the category of pseudocompact algebras and continuous algebra homomorphisms \cite[Theorem 3.6]{Simson}.  Consider the following categories:
\begin{itemize}
\item $C\dash\cat{Comod}$ the category of (discrete) left $C$-comodules and comodule homomorphisms \cite[\S 2.1]{DNR}.  By the fundamental theorem of comodules \cite[Theorem 2.1.7]{DNR}, every object of this category is a direct limit of finite dimensional comodules.
\item $C^*\dash\cat{PMod}$ the category of pseudocompact left $C^*$-modules and continuous module homomorphisms.  Such modules are inverse limits of finite dimensional discrete left $C^*$-modules \cite[\S 1]{Brumer}.
\item $\cat{DMod}\dash C^*$ the category of discrete right $C^*$-modules and module homomorphisms.  Such modules are direct limits of finite dimensional right $C^*$-modules \cite[\S 1]{Brumer}.
\end{itemize}

In the following diagram each arrow is a duality.  The image of the object $X$ under either arrow (or its inverse) is $X^*$ as a vector space, imbued with an appropriate additional structure:
$$\xymatrix{
C^*\dash\cat{PMod} \ar[rr] \ar[d] & & \cat{DMod}\dash C^* \\
C\dash\cat{Comod}
}$$
The horizontal arrow is an instance of the famous Pontryagin duality \cite[Proposition 2.3]{Brumer}.  The vertical arrow is due to Simson \cite[Theorem 4.3]{Simson}.

\medskip

The notion of a contramodule for the coalgebra $C$ was first introduced by Eilenberg and Moore \cite[\S III.5]{EilenbergMoore}.  Although they have received far less attention than comodules, they have been further studied by Positselski (\cite{Pos11}, for instance) and others (\cite{BBW09,UB69,Wis10}, for instance).  In this note we define the category $\cat{PContramod}\dash C$ of pseudocompact right $C$-contramodules and show that we can complete the diagram above as follows 
\begin{equation}\label{dualities}
\xymatrix{
C^*\dash\cat{PMod} \ar[rr] \ar[d] & & \cat{DMod}\dash C^*\ar[d] \\
C\dash\cat{Comod} \ar[rr] && \cat{PContramod}\dash C
}
\end{equation}
with each arrow or its inverse a duality given on objects by $X\mapsto X^*$ (imbued with appropriate additional structure).  One might think of the lower arrow as a Pontryagin duality between comodules and pseudocompact contramodules.  As an easy corollary of the main theorem, we will show that the objects of $\cat{PContramod}\dash C$ are inverse limits of finite dimensional right $C$-contramodules.  We also extend a very useful natural isomorphism of Takeuchi.

\section{Preliminaries}

Let $(C,\Delta,\varepsilon)$ be a coalgebra \cite[\S 1.1]{DNR}.  We define the category $C\dash\cat{Comod}$ explicitly in order to use it later.  A left $C$-comodule is a pair $(X,\rho)$ where $X$ is a discrete $k$-vector space and $\rho:X\to C\otimes X$ is a linear map such that the following two diagrams commute:
$$\xymatrix{
X \ar[r]^{\rho} \ar[d]_{\rho} & C\otimes X\ar[d]^{\tn{id}\otimes \rho}
& X\ar[rr]^{\sim}\ar[rd]_{\rho} && k\otimes X \\
C\otimes X\ar[r]_-{\Delta\otimes\tn{id}} & C\otimes C\otimes X 
&& C\otimes X \ar[ru]_{\varepsilon\otimes\tn{id}} & .
}$$
For convenience, we refer to the above diagrams as the ``comodule square'' and the ``comodule
triangle'', respectively.  A homomorphism of left comodules $(X,\rho)\to (Y,\gamma)$ is a linear map $\alpha: X\to Y$ such that the following ``comodule homomorphism square'' commutes:
$$\xymatrix{
X\ar[r]^{\alpha}\ar[d]_{\rho} & Y\ar[d]^{\gamma} \\
C\otimes X\ar[r]_{\tn{id}\otimes \alpha} & C\otimes Y
\,.}$$

Let $A,B$ be discrete vector spaces and let $Z$ be a pseudocompact vector space.  The map 
$$\psi_{A,Z}^B : \tn{Hom}_k(A, \tn{Hom}_k(B,Z)) \to \tn{Hom}_k(A\otimes B, Z)$$
given by $\psi_{A,Z}^B(\gamma)(a\otimes b) := \gamma(a)(b)$ is an isomorphism.  To see this, one writes $A$ and $B$ as direct limits of finite dimensional vector spaces $A_i, B_j$ respectively and $Z$ as an inverse limit of finite dimensional vector spaces $Z_k$.  Using repeatedly the isomorphism of \cite[Lemma 5.1.4(b)]{RZ} (which holds for pseudocompact vector spaces) and applying the tensor-hom adjunction, one obtains isomorphisms
\begin{align*}
\tn{Hom}_k(A, \tn{Hom}_k(B,Z)) & \iso \invlim_{i,j,k} \tn{Hom}_k(A_i, \tn{Hom}_k(B_j,Z_k)) \\
& \iso  \invlim_{i,j,k} \tn{Hom}_k(A_i\otimes B_j, Z_k) \\
& \iso \tn{Hom}_k(A\otimes B, Z).
\end{align*}
The isomorphism $\psi_{A,Z}^B$ is natural in all three variables.  Naturality in $A$ and $Z$ comes via the tensor-hom adjunction applied at the level of the finite dimensional vector spaces, while $B$ is the ``parameter'' (for more on this, see \cite[\S IV.7]{MacLane}).

\begin{defn}
A \emph{pseudocompact right $C$-contramodule} is a pseudocompact vector space $Z$ together with a continuous linear map $\theta: \tn{Hom}_k(C,Z)\to Z$ such that the following diagrams commute:
$$\xymatrix{
\tn{Hom}_k(C,\tn{Hom}_k(C,Z)) \ar[rr]^-{\tn{Hom}_k(C,\theta)}\ar[d]_{\psi_{C,Z}^C} &&  \tn{Hom}_k(C,Z)\ar[dd]^{\theta} \\
 \tn{Hom}_k(C\otimes C,Z)\ar[d]_{\tn{Hom}_k(\Delta,Z)} && \\
  \tn{Hom}_k(C,Z)\ar[rr]_-{\theta} && Z
}$$
$$\xymatrix{
Z && \tn{Hom}_k(k,Z)\ar[ll]_{\sim}\ar[ld]^{\tn{Hom}_k(\varepsilon,Z)} \\
& \tn{Hom}_k(C,Z)\ar[ul]^{\theta}
}$$
We call these diagrams the contramodule square and the contramodule triangle, respectively.   A homomorphism of pseudocompact right contramodules $(Z,\theta)\to (T,\eta)$ is a continuous linear map $\alpha: Z\to T$ such that the following ``contramodule homomorphism square'' commutes:
$$\xymatrix{
\tn{Hom}_k(C,Z)\ar[rr]^{\tn{Hom}_k(C,\alpha)}\ar[d]_{\theta} && \tn{Hom}_k(C,T)\ar[d]^{\eta} \\
Z\ar[rr]_{\alpha} && T
}$$
We denote the category of pseudocompact right contramodules and contramodule homomorphisms by $\cat{PContramod}\dash C$.
\end{defn}

\section{The lower arrow}

We define the lower horizontal arrow in (\ref{dualities}) and prove it is a duality.  The right vertical duality is then obtained by composing the others.

From now on we suppress notation, writing $(A,B)$ instead of $\tn{Hom}_k(A,B)$ and $\gamma\circ-$ (resp.\ $-\circ\gamma$) instead of $\tn{Hom}_k(A,\gamma)$ (resp.\ $\tn{Hom}_k(\gamma,B)$). Let $X$ be a discrete vector space.  We have natural isomorphisms
$$(X,C\otimes X) \xrightarrow{(-)^*} ((C\otimes X)^*,X^*) \xrightarrow{-\circ\psi_{C,k}^X} ((C,X^*), X^*).$$
Given $\rho\in (X,C\otimes X)$, denote the corresponding element $\rho^*\psi_{C,k}^X$ of $((C, X^*), X^*)$ by $\overline{\rho}$.

\begin{theorem}
The assignment defined on objects by $(X,\rho) \mapsto (X^*, \overline{\rho})$
and on morphisms by $\alpha\mapsto \alpha^*$ yields a duality of categories
$$C\dash\cat{Comod} \to \cat{PContramod}\dash C.$$
\end{theorem}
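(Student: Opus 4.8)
The plan is to bootstrap from the duality $(-)^*$ between the category $\cat{DVS}$ of discrete vector spaces and the category $\cat{PCVS}$ of pseudocompact vector spaces. This functor is a duality: every discrete space is a direct limit of finite dimensional spaces, every pseudocompact space an inverse limit of such, $(-)^*$ interchanges the two presentations, and it is the familiar linear duality on finite dimensional spaces; a quasi-inverse is again $(-)^*$, the units being the canonical maps to the double dual. In particular $(-)^*$ is fully faithful on vector spaces, so it reflects equalities of parallel morphisms and hence reflects commutativity of diagrams. I will also use that $\overline{\rho} = \rho^*\circ\psi^X_{C,k}$ is a composite of continuous maps, hence continuous, and that $\alpha^*$ is continuous for every linear $\alpha$ between discrete spaces, so that continuity requirements never obstruct anything.

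On morphisms $\alpha\mapsto\alpha^*$ is visibly contravariant and identity-preserving, and it is faithful because $(-)^*$ is faithful on underlying spaces. So, to prove the theorem, it remains to establish: (a) if $(X,\rho)$ is a left $C$-comodule then $(X^*,\overline{\rho})$ is a pseudocompact right $C$-contramodule; (b) if $\alpha$ is a comodule homomorphism then $\alpha^*$ is a contramodule homomorphism; (c) the functor is full; and (d) it is essentially surjective. The single tool powering all four is the following claim: for a discrete space $X$ and a linear map $\rho\colon X\to C\otimes X$, the comodule square (resp.\ triangle) holds for $\rho$ if and only if the contramodule square (resp.\ triangle) holds for $\overline{\rho}$; and for a linear map $\alpha\colon X\to Y$ between comodules, the comodule homomorphism square holds for $\alpha$ if and only if the contramodule homomorphism square holds for $\alpha^*$.

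To prove the claim I would apply the contravariant functor $(-)^*$ to the relevant comodule diagram and then rewrite, using the defining identity $\overline{\rho}=\rho^*\circ\psi^X_{C,k}$, the naturality of $\psi$ in each of its three variables, and the coherence identities relating the various ways of identifying iterated Hom-of-tensors such as $(C\otimes C\otimes X)^*$ with $\tn{Hom}_k(C,\tn{Hom}_k(C,X^*))$ (these coherences, like the naturality, are verified on the finite dimensional pieces and transported along the limits using \cite[Lemma 5.1.4(b)]{RZ}, exactly as $\psi$ itself was constructed). For example, dualizing the comodule square and pasting in the naturality square of $\psi$ for $\rho$ together with the relevant coherence identity converts $(\tn{id}\otimes\rho)^*$ into $\tn{Hom}_k(C,\overline{\rho})\circ\psi^C_{C,X^*}$ under the identifications, so that the dualized square is precisely the contramodule square for $\overline{\rho}$; the triangle and the homomorphism square are handled the same way with less bookkeeping. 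Because $(-)^*$ reflects commutativity, each implication is reversible, giving the stated equivalences.

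Granting the claim, (a) and (b) are its forward halves. For (c), a contramodule homomorphism $f\colon(Y^*,\overline{\gamma})\to(X^*,\overline{\rho})$ is in particular a continuous linear map $Y^*\to X^*$, so fullness of $(-)^*$ on vector spaces produces a unique linear $\alpha\colon X\to Y$ with $\alpha^*=f$; the contramodule homomorphism square now holds for $\alpha^*$, hence by the claim $\alpha$ is a comodule homomorphism, and $\alpha\mapsto f$. For (d), given a pseudocompact right $C$-contramodule $(Z,\theta)$, set $X:=Z^*$; transporting $\theta$ along the canonical isomorphism $Z\iso X^*$ gives a map $\theta'\colon\tn{Hom}_k(C,X^*)\to X^*$, and since $\rho\mapsto\overline{\rho}$ is the bijection $\tn{Hom}_k(X,C\otimes X)\iso\tn{Hom}_k(\tn{Hom}_k(C,X^*),X^*)$ displayed before the theorem, there is $\rho$ with $\overline{\rho}=\theta'$; by construction $(X^*,\overline{\rho})\iso(Z,\theta)$, and naturality of $\psi$ makes the double-duality isomorphism compatible with the structure maps, so it carries the contramodule axioms of $(Z,\theta)$ to those of $(X^*,\overline{\rho})$, whence the reverse half of the claim shows $(X,\rho)$ is a genuine comodule. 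The main obstacle is entirely organizational: arranging the diagram chase behind the claim so that the many instances of $\psi$ and of the double-duality map are pasted in correctly, and in particular pinning down the coherence identities for iterated $\psi$'s. Conceptually the result is forced — $C\otimes-$ is a comonad on $\cat{DVS}$, $\tn{Hom}_k(C,-)$ a monad on $\cat{PCVS}$, these correspond under $(-)^*$, and the theorem is the induced anti-equivalence between comodules over the comonad and modules over the monad — but verifying that correspondence needs the same $\psi$-computations, so I would simply carry out the direct verification.
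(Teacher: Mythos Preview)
Your proposal is correct and follows essentially the same approach as the paper: reduce to the underlying Pontryagin duality on vector spaces, then verify that the comodule axioms for $\rho$ correspond, via the naturality and coherence properties of $\psi$, to the contramodule axioms for $\overline{\rho}$ (and similarly for homomorphisms). The paper carries out the ``claim'' by writing down explicit pasted diagrams---a $2\times 2$ grid of squares for the coassociativity/contra-associativity correspondence and a prism for the counit/contra-unit correspondence---whereas you describe the same pasting more abstractly, and the paper leaves the fullness and essential-surjectivity bookkeeping implicit in the phrase ``it thus suffices to check'' where you spell out (c) and (d); but the substance is identical.
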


\begin{proof}
By Pontryagin duality (a special case of \cite[Proposition 2.3]{Brumer}) the operation yields a duality from discrete $k$-vector spaces to pseudocompact vector spaces.  It thus suffices to check that $(X,\rho)$ is a comodule if, and only if, $(X^*, \overline{\rho})$ is a contramodule and that $\alpha : (X,\rho)\to (Y, \gamma)$ is a comodule homomorphism if, and only if, $\alpha^*: (Y^*, \overline{\gamma})\to (X^*,\overline{\rho})$ is a contramodule homomorphism. 

Let $\rho: X\to C\otimes X$ be a linear map and consider the following diagram:
$$\xymatrix{
(C,(C,X^*)) \ar[rr]^{\psi_{C,X^*}^C}\ar[d]_{\psi_{C,k}^X\circ-}\ar@{}[rrd]|-{(1)}  &&  (C\otimes C,X^*)\ar[rr]^{-\circ\Delta}\ar[d]_{\psi_{C\otimes C, k}^X}\ar@{}[rrd]|-{(2)}   &&    (C,X^*)\ar[d]^{\psi_{C,k}^X}        \\
(C,(C\otimes X)^*)\ar[rr]_{\psi_{C,k}^{C\otimes X}}\ar[d]_{\rho^*\circ-}\ar@{}[rrd]|-{(3)}  &&  (C\otimes C\otimes X)^*\ar[rr]_{(\Delta\otimes\tn{id})^*}\ar[d]_{(\tn{id}\otimes\rho)^*}  &&    (C\otimes X)^*\ar[d]^{\rho^*}\ar@{}[lld]|-{(4)} \\
(C,X^*)\ar[rr]_{\psi_{C,k}^X}       &&   (C\otimes X)^* \ar[rr]_{\rho^*}     && X^*
}$$
We claim that squares $(1), (2)$ and $(3)$ commute:
\begin{itemize}
\item[$(1)$] is most easily seen to commute by a simple calculation with elements.
\item[$(2)$] commutes, being the natural transformation $\psi_{-,k}^X$ applied to the map $\Delta:C\to C\otimes C$.
\item[$(3)$] commutes, being the natural transformation $\psi_{C,k}^{-}$ applied to the map $\rho: X\to C\otimes X$.
\end{itemize}
The outer square commutes if, and only if, $(X^*, \overline{\rho}) = (X^*,\rho^*\psi_{C,k}^X)$ satisfies the contramodule square and Square $(4)$ commutes if, and only if $(X,\rho)$ satisfies the comodule square (by duality).  But the morphisms in Square $(1)$ are isomorphisms and hence the outer square commutes if, and only if, Square $(4)$ commutes.

\medskip

We check next the correspondence between the comodule and contramodule triangles.  Consider the following diagram, wherein the horizontal maps are natural isomorphisms.
$$\xymatrix{
 X^*  && (k\otimes X)^*\ar[ll]\ar[dl]^{(\varepsilon\otimes \tn{id})^*} \\
    &  (C\otimes X)^*\ar[ul]^{\rho^*}  &  \\
   && \\
 X^*\ar@{=}[uuu]   && (k,X^*)\ar[uuu]_{\psi_{k,k}^X}\ar[ll]\ar[dl]^{-\circ\varepsilon} \\
 & (C,X^*)\ar[uuu]_{\psi_{C,k}^X}\ar[ul]^{\overline{\rho}} &
}$$
The square at the back clearly commutes, the left front square commutes by the definition of $\overline{\rho}$ and the right front square commutes by the naturality of $\psi_{-,k}^X$ applied to $\varepsilon$.  The vertical maps are isomorphisms, so the upper triangle commutes if, and only if, the lower triangle commutes.  But the lower is the contramodule triangle and the upper is the (dual of the) comodule triangle.

\medskip

Let $\alpha: X\to Y$ be a linear transformation between the comodules $(X,\rho)$ and $(Y, \gamma)$.  One checks that $\alpha$ is a homomorphism of comodules if, and only if, $\alpha^*$ is a homomorphism of contramodules exactly as above: dualize the coalgebra homomorphism square 
and join it to the corresponding contramodule homomorphism square using equalities, $\psi_{C,k}^X$ or $\psi_{C,k}^Y$.  Observe that the joining squares commute and hence that the comodule homomorphism square commutes if, and only if, the contramodule homomorphism square commutes.
\end{proof}

\section{Corollaries}

\begin{corol}
The category $\cat{PContramod}\dash C$ is exactly the category of inverse limits of finite dimensional right $C$-contramodules.
\end{corol}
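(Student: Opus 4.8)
The plan is to transport the statement across the duality of the Theorem. First I would check that $(-)^*\colon C\dash\cat{Comod}\to\cat{PContramod}\dash C$ restricts to a duality between the full subcategory of finite dimensional left $C$-comodules and the full subcategory of finite dimensional right $C$-contramodules. Since $\dim_k X^* = \dim_k X$ whenever $X$ is finite dimensional, and since the Theorem supplies a quasi-inverse, an object of $\cat{PContramod}\dash C$ is finite dimensional precisely when its preimage is; hence the restriction is again a duality, in particular essentially surjective.

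Next I would use that a duality carries colimits to limits: for a direct system $(X_i)$ in $C\dash\cat{Comod}$ with direct limit $X = \dirlim X_i$, one has $X^* \iso \invlim X_i^*$, the inverse limit being formed in $\cat{PContramod}\dash C$. Combining this with the fundamental theorem of comodules \cite[Theorem 2.1.7]{DNR} --- every left $C$-comodule is the direct limit of its finite dimensional subcomodules --- and with essential surjectivity of the duality, every object of $\cat{PContramod}\dash C$ is isomorphic to an inverse limit, taken in $\cat{PContramod}\dash C$, of finite dimensional right $C$-contramodules. For the reverse inclusion, given an inverse system of finite dimensional contramodules, I would dualize it to a direct system of finite dimensional comodules; its direct limit $X$ is a $C$-comodule, and $X^*$ is the inverse limit of the original system, which therefore does lie in $\cat{PContramod}\dash C$. (Equivalently: $\cat{PContramod}\dash C$ is complete, being dual to the cocomplete category $C\dash\cat{Comod}$.)

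The one point requiring care --- and the main potential obstacle --- is to confirm that the inverse limit computed inside $\cat{PContramod}\dash C$ agrees with the underlying inverse limit of pseudocompact vector spaces, so that the ``inverse limits of finite dimensional contramodules'' in the statement (presumably meant at the level of pseudocompact vector spaces, as in the analogous descriptions of $C^*\dash\cat{PMod}$ and $\cat{DMod}\dash C^*$ in the introduction) really coincide with the objects of $\cat{PContramod}\dash C$. This amounts to showing that the forgetful functor $\cat{PContramod}\dash C\to(\text{pseudocompact vector spaces})$ creates inverse limits, which is the Pontryagin dual of the standard fact that filtered colimits of comodules are computed on underlying vector spaces. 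Everything else is a formal consequence of the Theorem.
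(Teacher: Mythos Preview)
Your approach is correct and is exactly the paper's: the paper's proof is the single sentence ``This is dual to the fundamental theorem of comodules,'' and you have simply unpacked that sentence with appropriate care. Your extra paragraph verifying that the forgetful functor to pseudocompact vector spaces creates inverse limits is a point the paper leaves implicit, but it is the right thing to check and your justification via Pontryagin duality is sound.
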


\begin{proof}
This is dual to the fundamental theorem of comodules \cite[Theorem 2.1.7]{DNR}.
\end{proof}





A discrete right module for the pseudocompact algebra 
$$(A,m : A\ctens A \to A,u : k\to A)$$ 
(where $\ctens = \ctens_k$ is the completed tensor product \cite[\S 2]{Brumer}) is usually defined to be a discrete vector space $X$ together with a continuous map $X\times A\to X$ (cf.\ \cite[\S 5.1]{RZ}) satisfying the obvious axioms.  One familiar with coalgebras might complain that the object $X\times A$ lives neither in the discrete nor the pseudocompact category, making it difficult to interpret the multiplication as a map from a tensor product.  When dealing with dualities and coalgebras, it is perhaps easier to define a discrete right $A$-module as a discrete vector space $X$ together with a linear map $\theta:X\to \tn{Hom}_k(A,X)$ such that the diagrams
$$\xymatrix{
X \ar[r]^{\theta}\ar[d]_{\theta} & (A,X)\ar[d]^{\theta\circ -} & (k,X) && X\ar[ll]_{\sim}\ar[ld]^{\theta} \\
(A,X)\ar[r] & (A,(A,X)) &&  (A,X)\ar[ul]^{-\circ u}  &
}$$
commute, wherein the unmarked arrow is the composition 
$$(A,X) \xrightarrow{-\circ m} (A\ctens A, X) \to (A,(A,X))$$
coming from the isomorphism \cite[Lemma 2.4]{Brumer}.  This definition is equivalent to the standard one.  Interpreted this way, a homomorphism from a pseudocompact module to a discrete module
$$\gamma : (Y,\rho:Y\ctens A\to Y) \to (X, \theta : X\to (A,X))$$
is a continuous map $\gamma : Y\to X$ such that
$$\theta\gamma(y)(a) = \gamma\rho(y\ctens a)$$
for each $y\in Y, a\in A$.  
This equality can be written without elements: $\gamma$ (as above) is a homomorphism if $\psi(\gamma\rho) = \theta\gamma$, where $\psi$ is the natural isomorphism
$$(Y\ctens A, X)\to (Y, (A,X))$$
again coming from \cite[Lemma 2.4]{Brumer}.  Observe that, writing $X = Z^*$ for some pseudocompact vector space $Z$ and noting that
$$(A\ctens Z)^* = (\invlim_{i,j} A_i\otimes Z_j)^* = \dirlim_{i,j} A_i^* \otimes Z_j^* = A^*\otimes Z^*,$$
we have 
$$(A,X)^* = (A,Z^*)^* = (A\ctens Z)^{**} = (A^*\otimes Z^*)^* = (A^*,Z^{**}) = (A^*,X^*).$$
We can thus dualize the above notion of homomorphism:
\begin{defn}
Let $C$ be a coalgebra, let $(N,\theta:(C,N)\to N)$ be a right pseudocompact $C$-contramodule and $(M,\mu:M\to M\otimes C)$ be a right C-comodule.  A \emph{homomorphism} $\gamma:(N,\theta) \to (M,\mu)$ is a continuous linear map $N\to M$ such that
$$\overline{\psi}(\gamma\theta) = \mu\gamma,$$
where $\overline{\psi}:((C,N),M)\to (N, M\otimes C)$ is obtained from $\psi$ above by duality.  
\end{defn}




Let $C,D$ be coalgebras, $L$ a right $C$-comodule, $M$ a $C\dash D$-bicomodule, and $N$ a right pseudocompact $D$-contramodule.  The vector space $\tn{Hom}_D(N,M)$ inherits a natural left $C$-comodule structure (by dualizing twice the discrete right $C^*$-module $\tn{Hom}_{D^*}(M^*,N^*)$, for instance) and so $\tn{Hom}_{D}(N,M)^*$ is a right $C$-contramodule.  We define this way the functor
$$h(M,-) := \tn{Hom}_D(-,M)^* : \cat{Contramod}\dash D \to \cat{Contramod}\dash C,$$
a variant of the ``cohom'' functor of Takeuchi \cite[\S 1]{Takeuchi}.  In the category of comodules, the cotensor product functor $M\cotens_D - : D\dash\tn{Comod}\to C\dash\tn{Comod}$ has a left adjoint if, and only if, $M$ is quasi-finite as a left $C$-comodule, and in this case the corresponding left adjoint is the cohom functor of Takeuchi \cite[Proposition 1.10]{Takeuchi}.  By allowing pseudocompact contramodules, we obtain a natural isomorphism without quasi-finiteness conditions on $M$:

\begin{corol}
Let $C,D$ be coalgebras, $L$ a right $C$-comodule, $M$ a $C\dash D$-bicomodule, and $N$ a right pseudocompact $D$-contramodule.  There is a natural isomorphism of vector spaces
$$\tn{Hom}_{D}(N, L\cotens_C M) \iso \tn{Hom}_{C}(h(M,N), L).$$
\end{corol}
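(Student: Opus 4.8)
The plan is to push the whole identity through the dualities of~(\ref{dualities}) and their right-handed counterparts, where it becomes the ordinary hom--tensor adjunction for modules over the pseudocompact algebras $C^*$ and $D^*$ --- an adjunction that, unlike Takeuchi's result, carries no finiteness restriction.

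First I would dualise the data. Since $M$ is a $C\dash D$-bicomodule, $M^*$ is a pseudocompact $C^*\dash D^*$-bimodule; likewise $L^*$ is a pseudocompact right $C^*$-module, while $N^*$ is a discrete right $D^*$-module (by Pontryagin duality together with the dualities of~(\ref{dualities})). By the preceding Definition and the discussion of homomorphisms from pseudocompact modules into discrete modules surrounding it, a homomorphism of a pseudocompact contramodule into a comodule is the same thing as a homomorphism of the dual pseudocompact module into the dual discrete module; applying this over $D$, then over $C$, yields
$$\Hom_D(N,\,L\cotens_C M)\ \iso\ \Hom_{D^*}\bigl((L\cotens_C M)^*,\,N^*\bigr),\qquad \Hom_C\bigl(h(M,N),\,L\bigr)\ \iso\ \Hom_{C^*}\bigl(L^*,\,h(M,N)^*\bigr).$$
As $h(M,N)=\Hom_D(N,M)^*$ and $\Hom_D(N,M)$ is a discrete vector space, Pontryagin duality gives $h(M,N)^*\iso\Hom_D(N,M)$; and by the way its $C$-comodule structure was defined, this is exactly the discrete right $C^*$-module $\Hom_{D^*}(M^*,N^*)$. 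So the second isomorphism reads $\Hom_C\bigl(h(M,N),L\bigr)\iso\Hom_{C^*}\bigl(L^*,\,\Hom_{D^*}(M^*,N^*)\bigr)$.

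Next I would identify $(L\cotens_C M)^*\iso L^*\ctens_{C^*}M^*$ as pseudocompact right $D^*$-modules. Presenting $L\cotens_C M$ as the equaliser of the two maps $L\otimes M\rightrightarrows L\otimes C\otimes M$ built from the $C$-coactions and applying the exact functor $(-)^*$ turns the equaliser into a coequaliser; since $(-)^*$ sends $U\otimes V$ to $U^*\ctens V^*$ for discrete $U,V$ (as in the computations recorded above), the resulting cokernel is the completed tensor product $L^*\ctens_{C^*}M^*$, with its evident right $D^*$-action. With this identification the corollary reduces to
$$\Hom_{D^*}\bigl(L^*\ctens_{C^*}M^*,\,N^*\bigr)\ \iso\ \Hom_{C^*}\bigl(L^*,\,\Hom_{D^*}(M^*,N^*)\bigr),$$
natural in $L^*$ and $N^*$: the hom--tensor adjunction for modules over pseudocompact algebras. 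I would prove this exactly as the isomorphism $\psi$ of Section~2 was proved: write $L^*$ and $M^*$ as inverse limits of finite-dimensional (bi)modules and $N^*$ as a direct limit of finite-dimensional modules, reduce to the classical finite-dimensional hom--tensor adjunction, and pass to the limit, checking $C^*$- and $D^*$-equivariance at each stage. Reading the chain of identifications backwards gives the asserted natural isomorphism.

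I expect the real difficulty to be bookkeeping rather than a matter of principle: one has to track with care which module structure sits on which dual --- that the left $C^*$-action on $M^*$ produces a right $C^*$-action on $\Hom_{D^*}(M^*,N^*)$, that the right $D^*$-action on $L^*\ctens_{C^*}M^*$ is the intended one, and so on --- and verify that the several applications of $(-)^*$ and of the $\psi$-type isomorphisms intertwine all of it. The step most likely to give trouble is the compatibility of $(L\cotens_C M)^*\iso L^*\ctens_{C^*}M^*$ with the $D^*$-module structures, on which the entire reduction rests.
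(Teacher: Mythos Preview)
Your approach is correct and is essentially the paper's own: dualise via the discussion preceding the corollary, identify $(L\cotens_C M)^*$ with $L^*\ctens_{C^*}M^*$, and reduce to the hom--tensor adjunction for modules over the pseudocompact algebras $C^*,D^*$. The only difference is that the paper cites \cite[Lemma~2.4]{Brumer} for that adjunction outright, whereas you propose to re-derive it by passing to finite-dimensional pieces.
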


\begin{proof}
In light of the discussion above and the fact that
$$L\cotens_C M \iso (L^*\ctens_{C^*}M^*)^*,$$ 
the result is dual to \cite[Lemma 2.4]{Brumer}.
\end{proof}



\begin{thebibliography}{}

\bibitem{BBW09}
Gabriella B\"ohm, Tomasz Brzezi\'nski, and Robert Wisbauer.
\newblock Monads and comonads on module categories,
\newblock {\em Journal of Algebra}, 322(5): 1719--1747, 2009.

\bibitem{Brumer} 
A.\ Brumer.
\newblock {Pseudocompact algebras, profinite groups and class formations}, 
\newblock {\em Journal of Algebra}, 4: 442--470, 1966.

\bibitem{DNR}
S.\ Dascalescu, C.\ Nastasescu and S.\ Raianu.
\newblock {\em Hopf algebras. An introduction}, volume~235 of {\em Monographs and Textbooks in Pure and Applied Mathematics}.
\newblock Marcel Dekker, Inc., New York, 2001.

\bibitem{EilenbergMoore}
S.\ Eilenberg and J.C.\ Moore.
\newblock {\em Foundations of relative homological algebra}, {\em Memoirs of the American Mathematical Society}.
\newblock American Mathematical Society, Rhode Island, 1966.

\bibitem{MacLane}
Saunders Mac~Lane.
\newblock {\em Categories for the working mathematician}, volume~5 of {\em Graduate Texts in Mathematics}.
\newblock Springer-Verlag, New York, second edition, 1998.

\bibitem{Pos11}
Leonid Positselski.
\newblock Two kinds of derived categories, {K}oszul duality, and
  comodule-contramodule correspondence.
\newblock {\em Memoirs of the AMS}, 212(996):vi+133, 2011.

\bibitem{RZ} 
L.\ Ribes and P.A.\ Zalesskii.
\newblock {\em Profinite Groups}.
\newblock Springer, Heidelberg, 2010.



\bibitem{Simson}
Daniel Simson.
\newblock Coalgebras, Comodules, pseudocompact algebras and tame comodule type.
\newblock {\em Colloquium Mathematicum}, 90(1): 101--150, 2001.



\bibitem{UB69}
Hiroshi Uehara and Frank Brenneman.
\newblock On a cotriple homology in a fibred category.
\newblock {\em Publications of the Research Institute for Mathematical Sciences}, 4:13--38, 1968/1969.

\bibitem{Takeuchi}
Mitsuhiro Takeuchi.
\newblock Morita theorems for categories of comodules.
\newblock {\em Journal of the Faculty of Science, the University of Tokyo}, 24: 629--644, 1977.

\bibitem{Wis10}
Robert Wisbauer.
\newblock Comodules and contramodules.
\newblock {\em Glasgow Mathematical Journal}, 52(A):151--162, 2010.

\end{thebibliography}
\end{document}